\newcommand{\labbel}[1]{\label{#1} [[{\bf #1}]]}  
\renewcommand{\labbel}{\label}
\newcommand{\ddd}{\mathrel{\delta}} 
\newcommand{\nd}{\mathrel{\centernot\delta}}
\newtheorem{theorem}{Theorem}[section]
\newtheorem{lemma}[theorem]{Lemma}
\newtheorem{proposition}[theorem]{Proposition} 
\newtheorem{corollary}[theorem]{Corollary} 
\newtheorem{claim}[theorem]{Claim} 
\newtheorem*{claim*}{Claim}
\newtheorem*{theorem*}{Theorem}
\newtheorem*{proposition*}{Proposition}
\newtheorem*{corollary*}{Corollary}
\newtheorem*{lemma*}{Lemma}
\newtheorem*{scholion*}{Scholion}
\theoremstyle{definition}
\theoremstyle{remark}
\newtheorem{remark}[theorem]{Remark}
\newtheorem*{remark*}{Remark}
\newtheorem*{remarks*}{Remarks}
\newtheorem*{acknowledgement}{Acknowledgement}
\newtheorem*{observation*}{Observation}
\begin{document}

 \title{Ivanova contact join-semilattices are not finitely axiomatizable}

\author{Paolo Lipparini} 
\address{Dipartimento di Matematica\\Viale 
dei Ricontatti di  Cerca  
\\Universit\`a di Roma ``Tor Vergata'' 
\\I-00133 ROME ITALY}

\email{lipparin@axp.mat.uniroma2.it}

\subjclass{06A12; 06F99, 03G25, 54E05}

\begin{abstract}
We show that the class of contact join-semilattices
 introduced by T. Ivanova
(Studia Logica
\textbf{110}, 1219--1241, 2022)
 is not finitely axiomatizable.
On the other hand, a simple finite axiomatization
exists for the class of those join-semilattices with a weak contact relation
 which can be embedded into 
the reduct of 
a weak contact Boolean algebra 
(equivalently, 
distributive lattice).
\end{abstract} 

\keywords{weak contact relation, additive contact relation,
 contact join-semilattice,
weak contact Boolean algebra,
finite axiomatizability}

\maketitle

\section{Introduction} \labbel{intro}

Structures with a proximity  or a contact  relation are useful
in topology \cite{DC}, algebraic logic \cite{BCGL,C}, 
computer science \cite{WN}\footnote{See
\cite[Remark 6.11]{hypercs} 
 for explanations about the terminology.},
image analysis \cite{PN}, knowledge representation
\cite{V}, graph theory \cite{BC,hypercs} 
  and are a fundamental tool
in region based theory of space \cite{BD}.
Recall that a  contact algebra is a Boolean algebra endowed with a further
contact relation.
D{\"u}ntsch,  MacCaull, Vakarelov, Winter
 \cite{DMVW} proposed to study
contact relations in algebras with less structure; 
in particular, Ivanova \cite{I} suggested the naturalness
of studying join-semilattices with a contact. See also \cite{mtt}
for further arguments in favor of the use of the semilattice
operation only. We refer to the quoted sources 
and to \cite{cs} for further
details, motivation, history and references.
 
Ivanova \cite{I} provided an axiomatization for those
contact join-semilattices which admit a representation as a
substructure of some
field of sets, where two sets are in  contact if
they have nonempty intersection.
This is  equivalent to being embeddable into
a complete and atomic Boolean algebra  with overlap contact.
Ivanova also showed that the axiomatization
characterizes contact join-semilattices
 which admit an appropriate topological representation,
and asked whether there is a finite axiomatization.
In \cite{cs} we presented an equivalent axiomatization
for Ivanova contact join-semilattices and also axiomatized the class
of those weak contact join-semilattices embeddable into 
a, possibly complete and atomic, 
 weak contact Boolean algebra, equivalently, 
into a weak contact  distributive lattice
(here the contact is not necessarily overlap).

Here we use the axiomatizations from \cite{cs}  in order to show
that the former class  is not
finitely axiomatizable, while the latter is.
As a consequence, from \cite[Corollary 5.1]{cs}
we get that in the language of
contact join-semilattices the set of logical consequences of the theory of Boolean 
algebras with an overlap contact relation is not finitely
axiomatizable. On the other hand, an easily described finite axiomatization exists
if we remove the request that the contact relation is overlap.
We also show that the validity of some of the representation theorems
mentioned above is equivalent to the Boolean Prime Ideal Theorem,
hence needs a consequence of the axiom of choice. See Proposition \ref{zf}.

As we mentioned, motivations for the study of join-semilattices
with a contact relation are presented in
\cite{C,DMVW,I,cs}, among others.
Our main technique in the proof of 
Theorem \ref{dn} is constructive and 
is likely to be useful for constructing many more
interesting examples of contact join-semilattices. 
A further possible  research direction
is the study of join-semilattices with hypercontact
$n$-ary relations, rather than just a binary contact.
See \cite{V}. In \cite{hypercs} we  show that a large part of 
the results presented here generalize to the wider context.   
See Remark \ref{final} below for a discussion.

\section{Preliminaries} \labbel{prel} 

For simplicity, we will always assume that posets 
and semilattices have a minimum element $0$ and that homomorphisms
preserve $0$. 
A \emph{weak contact relation} 
on some poset $\mathbf S$ 
with $0$ is 
 a symmetric reflexive binary relation $\delta$  on $S \setminus \{ 0 \} $
 such that     
\begin{align}
\labbel{ext}    \tag{Ext} 
& a \ddd b \ \  \& \ \  a \leq a_1\ \  \&\ \  b \leq b_1 \ 
\Rightarrow \  a_1 \ddd b_1,
  \end{align}
for all $ a, b, a_1, b_1 \in S$. 
We write $a \nd b$ to mean that  $a \ddd b$ does not
hold.  

A typical example of a weak contact relation
is the \emph{overlap} relation, which can be defined on any poset
$\mathbf P$ 
with $0$. To get the overlap relation, simply set
$a \ddd b$ if there is $p \in P$ such that $p >0$, $p\leq a$
and $p \leq b$.  
 
A \emph{weak contact  join-semilattice}
is a structure   $(S, {+}, 0, { \delta  } )$,
where $(S, {+}, 0)$ is a join-semilattice with $0$  and $\delta$ 
is a   weak contact relation, as defined above.
Note that in some previous papers we frequently used
the word  ``semilattice'' in order to mean ``join-semilattice''.
More generally, a  \emph{weak contact  lattice} is a lattice
together with a weak contact relation. Weak contact
Boolean algebras are defined similarly.
 
We will occasionally also deal with nonsymmetrical 
relations.
 A \emph{weak pre-contact relation} 
on some poset $\mathbf S$ 
with $0$ is 
 a reflexive binary relation $\delta$  on $S \setminus \{ 0 \} $
satisfying \eqref{ext}. Thus in a weak pre-contact
we leave out the request that $\delta$ is symmetrical.
 Note that some authors do not assume  reflexivity
in the definition of a weak pre-contact relation,
but here we will always assume reflexivity.

Homomorphisms and embeddings
are always intended in the usual sense.
For example, a \emph{homomorphism} 
 of weak (pre-)contact  join-semilattices
is a $0$-preserving semilattice homomorphism $\varphi$  such that 
(i) $a \ddd b$ implies $ \varphi (a) \ddd \varphi (b)$,
for all $a,b$ in the domain.
An \emph{embedding}  of    weak contact  join-semilattices
 is an injective homomorphism such that
the reverse condition also holds, namely,
(ii) $ \varphi (a) \ddd \varphi (b)$  implies $a \ddd b$.
In certain situations we need to assume that $\varphi$  preserves
only part of the structure, not necessarily all the structure; for example, a
\emph{$\delta$-homomorphism}, or a \emph{contact homomorphism} 
is a function $\varphi$   such that (i) above holds,
but $\varphi$  is not necessarily required to be, say, order preserving.

The following property
is frequently required in the definition of a contact
relation on a join-semilattice (this is the reason for the terminology including
``weak'').
An \emph{additive contact relation} on some join-semilattice is a weak contact relation
satisfying the following condition. 
\begin{align} 
\labbel{add}    \tag{Add} 
&  a \ddd b{+}c 
\Rightarrow 
 a \ddd b \text{ or } a \ddd c.
 \end{align} 
We will not assume additivity, unless explicitly mentioned.

As mentioned in the introduction,
Ivanova \cite{I} axiomatized those contact join-semilattices
which admit   good set-theoretical and topological representations, 
calling them  ``contact join-semilattices''.
Here, when we mention a (weak) contact  join-semilattice
we will only assume the semilattice axioms, together with the properties
of (weak) contact listed above. Hence
we will refer to the structures considered in 
\cite{I} as  \emph{Ivanova contact join-semilattices}. 
In \cite[Corollary 3.4]{cs} we have provided
an alternative axiomatization of Ivanova contact join-semilattices, see below 
for details.
In \cite{I} semilattices are assumed to have a maximum $1$.
As far as the results presented here are concerned, it is irrelevant 
whether we request or
not the existence of $1$. See \cite[Remark 3.3(a)]{cs}.    

Now we list the relevant conditions,
where $\mathbf S$ is a weak contact join-semilattice and
$n$ varies among positive natural numbers.

\begin{align}
\labbel{d1+}    \tag{D1+} 
&\begin{aligned} 
& \text{For every } n  \text{ and } 
a,b ,  c_{1,0}, c_{1,1},  
\dots, c_{n,0},  c_{n,1} \in S,
\\
& \text{IF } 
  c_{1,0}\nd c_{1,1}, \  c_{2,0}\nd c_{2,1}, \  
\dots, \  c_{n,0}\nd c_{n,1} 
\\
&\text{and  }
b \leq 
a +  c_{1,f(1)}  + \dots + 
 c_{n,f(n)}, \text{ for all $f: \{ 1, \dots , n\} \to \{ 0, 1  \}$,}
\\
& \text{THEN  } b \leq a.  
\end{aligned} 
\\[8 pt]
 \labbel{d2n}    \tag{D2$_n$} 
&\begin{aligned} 
& \text{For every  } 
a,b ,  c_{1,0}, c_{1,1},  
\dots, c_{n,0},  c_{n,1} \in S,
\\
& \text{IF } 
  c_{1,0}\nd c_{1,1},   
\dots,   c_{n,0}\nd c_{n,1} 
\text{ and, for every $f: \{ 1, \dots , n\} \to \{ 0, 1  \}$,}
\\
&\text{at least one of the following two inequalities holds}
\\
&b \leq   c_{1,f(1)}  + \dots + c_{n,f(n)}, \text{ or } 
a \leq   c_{1,f(1)}  + \dots + c_{n,f(n)},
\\
& \text{THEN  } b \nd a.  
\end{aligned} 
  \end{align}  
 
The special case $n=1$ of 
\eqref{d1+} has been called (D1)  in \cite{cs}, where   
we have proved that  (D1)  implies \eqref{d1+}.
The assumption that \eqref{d2n} holds for every 
$n$ has been called (D2) in  \cite[Section 2]{cs}.
In Section \ref{nofin} here we will show that, contrary to the case of \eqref{d1+},
(D2) is not equivalent to a first-order sentence.  

\begin{theorem} \labbel{ivan}
\cite[Corollary 3.4]{cs} 
 A weak  contact join-semilattice $\mathbf S$
is an Ivanova contact join-semilattice if and only if
$\mathbf S$ satisfies  \textup{(D1)}
and \eqref{d2n}, for every   
 positive integer $n$. 
 \end{theorem}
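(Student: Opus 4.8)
The plan is to read the statement as a representation theorem. By the identification recalled in the introduction, $\mathbf S$ is an Ivanova Contact join-semilattice precisely when it embeds, as a weak contact semilattice, into a field of sets in which contact is nonempty intersection. Both (D1) and each instance of \eqref{d2n} are universal conditions in $\leq$ (definable from ${+}$) and in $\ddd,\nd$, hence pass to substructures; so for \emph{necessity} it suffices to verify them in a field of sets with overlap contact, where they are transparent. For (D1): if $c_0\cap c_1=\emptyset$, $b\subseteq a\cup c_0$ and $b\subseteq a\cup c_1$, then $b\subseteq(a\cup c_0)\cap(a\cup c_1)=a\cup(c_0\cap c_1)=a$. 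For \eqref{d2n}: were some point $x$ in $a\cap b$, then for each $i$ the disjointness $c_{i,0}\cap c_{i,1}=\emptyset$ would let me choose $f(i)$ with $x\notin c_{i,f(i)}$, so that $x$ avoids $c_{1,f(1)}\cup\dots\cup c_{n,f(n)}$, contradicting that this union contains $a$ or $b$; hence $a\cap b=\emptyset$, that is, $a\nd b$.

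The substance is the converse. Assuming (D1) and \eqref{d2n} for every $n$, I would take as points the \emph{clans} of $\mathbf S$: the up-closed, $0$-free, prime subsets $\Gamma\subseteq S$ (so that $S\setminus\Gamma$ is an ideal) all of whose members are pairwise in contact. Put $X=\{\,\Gamma:\Gamma\text{ a clan}\,\}$ and $h(s)=\{\,\Gamma\in X:s\in\Gamma\,\}$. Three facts are immediate: $h(0)=\emptyset$; $h(a+b)=h(a)\cup h(b)$, since the ideal $S\setminus\Gamma$ is closed downward and under ${+}$, so $a+b\in\Gamma$ iff $a\in\Gamma$ or $b\in\Gamma$; and $a\nd b\Rightarrow h(a)\cap h(b)=\emptyset$, because no clan holds two elements out of contact. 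Everything then reduces to two extension lemmas: (i) if $a\not\leq b$ then some clan contains $a$ but not $b$, which gives order-reflection and hence injectivity; and (ii) if $a\ddd b$ then some clan contains both $a$ and $b$, which supplies the one remaining implication $a\ddd b\Rightarrow h(a)\cap h(b)\neq\emptyset$. The map $h$ then realizes $\mathbf S$ inside the complete atomic Boolean algebra $\mathcal P(X)$ with overlap contact.

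I would prove both lemmas by a single construction. Choose, for every non-contacting pair $u\nd v$, one of $u,v$ to discard, and let $\Gamma$ be the complement of the ideal generated by all discarded elements together with $\mathrm{Out}$, where $\mathrm{Out}=\{b\}$ in case (i) and $\mathrm{Out}=\emptyset$ in case (ii). The complement of any ideal is automatically up-closed, prime and $0$-free; discarding a member of every non-contacting pair makes $\Gamma$ pairwise in contact; and $\Gamma$ contains the target set $\mathrm{In}$ (namely $\{a\}$, resp.\ $\{a,b\}$) while missing $\mathrm{Out}$ exactly when the generated ideal avoids $\mathrm{In}$. Finding a coherent global choice is a compactness problem in the product $\{0,1\}^{P}$, where $P$ is the set of non-contacting pairs, and the decisive observation is that its finite consistency is \emph{exactly} the hypotheses: on $n$ pairs, case (i) asks for a selection $f$ with $a\not\leq b+c_{1,f(1)}+\dots+c_{n,f(n)}$, which is the contrapositive of \eqref{d1+} (implied by (D1)), while case (ii) asks for $f$ with $a\not\leq c_{1,f(1)}+\dots+c_{n,f(n)}$ and $b\not\leq c_{1,f(1)}+\dots+c_{n,f(n)}$, which is the contrapositive of \eqref{d2n}.

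I expect the principal difficulty to be conceptual rather than computational: seeing that a clan can be manufactured by discarding one element from each non-contacting pair, and that the attendant finite-consistency conditions match (D1) and the \emph{entire} schema \eqref{d2n} on the nose — with the salient feature, which ultimately drives Theorem \ref{dn}, that no single $n$ can suffice. The passage from finite to global consistency is a genuine use of the Boolean Prime Ideal Theorem, via compactness of $\{0,1\}^{P}$, matching the dependence recorded in Proposition \ref{zf}; here one checks that finite intersections of the relevant clopen constraints are nonempty by passing to the union of the finitely many finite pair-sets involved and invoking the monotonicity of ${+}$. Since case (ii) alone calls on \eqref{d2n} for arbitrarily large $n$, this construction also makes plausible the separation phenomena pursued in the rest of the paper.
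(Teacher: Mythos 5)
Your proposal cannot be compared against a proof in the paper, because the paper contains none: Theorem \ref{ivan} is quoted from \cite[Corollary 3.4]{cs}, and the paper explicitly invites the reader to take the characterization as a definition. What you have written is therefore a self-contained reconstruction of the representation theorem that the paper outsources, and as far as I can check it is correct. Reading ``Ivanova Contact join-semilattice'' as ``embeddable into a field of sets with nonempty-intersection contact'' (the identification the introduction makes), your necessity direction is fine: (D1) and each \eqref{d2n} are universal in $+$, $\leq$, $\ddd$, $\nd$, hence reflected by embeddings, and your set-theoretic verifications of them under overlap contact are correct. For sufficiency, the clan construction works: complements of ideals are exactly the up-closed, prime, $0$-free sets; discarding one member of each non-contacting pair forces pairwise contact; the constraint sets in $\{0,1\}^{P}$ are clopen; the finite intersection property follows by passing to the union of the finitely many pair-sets and using monotonicity of $+$; and the nonemptiness of a single constraint is precisely the contrapositive of \eqref{d1+} (hence of (D1), by \cite[Lemma 2.3]{cs}, which the paper records) in case (i), and of \eqref{d2n} in case (ii). Your observation that the appeal to compactness of $\{0,1\}^{P}$, i.e.\ to the Prime Ideal Theorem, is exactly the dependence that Proposition \ref{zf} shows to be unavoidable is a genuine consistency check on the argument, and your remark that case (ii) consumes \eqref{d2n} for unbounded $n$ correctly anticipates Theorem \ref{dn}.

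Two small points for a polished write-up. First, restrict $P$ to pairs of \emph{nonzero} elements: pairs involving $0$ are vacuously non-contacting, but $0$ lies in every ideal and in no clan, so they carry no information and would needlessly burden the choice function. Second, since Ivanova's class in \cite{I} is officially defined by her axioms, what you prove is the equivalence of (D1) plus the schema \eqref{d2n} with representability; the bridge from representability back to Ivanova's axiomatic class is her own representation theorem, and that reliance should be stated explicitly, even though it matches how the paper itself uses the term.
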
 

As far as the present note is concerned, the reader might take
the characterization in Theorem  \ref{ivan} 
 as the definition of an  Ivanova contact join-semilattice.
We refer to \cite{I,cs} for further details about
the above notions. 

The next useful lemma is implicit in the proof of \cite[Theorem 4.1]{cs}.

\begin{lemma} \labbel{lemcombis}
Suppose that 
$\mathbf S = (S, {\leq}, 0 , \delta_S ) $ is a poset with a
weak  (pre-)contact relation $\delta_S$,
$\mathbf Q = (Q, {\leq}, 0  ) $ is a poset with $0$ and $\kappa$ 
is a function from $\mathbf S$ to
$\mathbf Q$ such that  
$\kappa(a) = 0$ implies $a= 0$, for every $a \in S$.
 Let $\delta_Q$ be defined on $\mathbf Q$ by letting
$ b_1 \ddd_Q b_2  $
if at least one of the following conditions hold:

(a) there is $q \in Q$ such that
$0 < q$,  
 $q \leq b_1 $ and  $ q \leq  b_2 $, or 

(b) there are 
$a_1, a_2 \in S$  
 such that $ a_1 \ddd_S a_2  $ and  
such that  $ \kappa  (a_1) \leq b_1$
and $ \kappa  (a_2) \leq b_2$.

 Then
  \begin{enumerate}[(i)]   
\item
$\ddd_Q $ is a weak (pre-)contact on $\mathbf Q$
and $\kappa$ is a $\delta$-homomorphism from 
$\mathbf S$ to $\mathbf Q$. 
In fact, $\ddd_Q $ is the smallest weak (pre-)contact on $\mathbf Q$ 
which makes $\kappa$  a $\delta$-homomorphism.
\item
Suppose in addition that $\kappa$ is an order embedding such that,
whenever $ a_1 \nd_S a_2  $,
the meet of 
$\kappa (a_1) $ and of $  \kappa (a_2)$
 in $\mathbf Q$ exists and is equal to $0$.
Then  $\kappa$ is a $\delta$-embedding from 
$\mathbf S$ to $\mathbf Q$.
    \end{enumerate} 
 \end{lemma}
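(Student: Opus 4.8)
The plan is to verify every clause straight from the definitions, always splitting on the two ways (a) and (b) in which $\ddd_Q$ can hold.

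For (i), I would first check that $\ddd_Q$ is a weak contact. That a contact forces both arguments to be nonzero is the only point using the hypothesis $a=0 \Leftrightarrow \kappa(a)=0$: in clause (b), $a_1 \ddd_S a_2$ gives $a_1 \neq 0$, hence $\kappa(a_1) \neq 0$, so $b_1 \geq \kappa(a_1) > 0$ (clause (a) forces this directly, since $q>0$ lies below $b_1$). Symmetry is immediate, as (a) is symmetric in $b_1,b_2$ and $\ddd_S$ is symmetric in (b). Reflexivity on $Q\setminus\{0\}$ follows from (a) with the witness $q=b$, and (Ext) holds because enlarging $b_1,b_2$ preserves the defining inequalities in either clause. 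Next, $\kappa$ preserves contact by applying (b) with $a_1,a_2$ as their own witnesses; since $\kappa$ is order preserving and sends $0$ to $0$, it is a contact homomorphism.

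For the minimality claim, let $\delta'$ be any weak contact making $\kappa$ a contact homomorphism, and suppose $b_1 \ddd_Q b_2$. If this holds via (a), reflexivity of $\delta'$ gives $q \mathrel{\delta'} q$, and (Ext) for $\delta'$ (with $q\leq b_1$ and $q\leq b_2$) yields $b_1 \mathrel{\delta'} b_2$. If it holds via (b), then $\kappa(a_1)\mathrel{\delta'}\kappa(a_2)$ because $\kappa$ is a homomorphism for $\delta'$, and (Ext) for $\delta'$ again gives $b_1 \mathrel{\delta'} b_2$. Thus $\ddd_Q \subseteq \delta'$: the overlap part is forced by reflexivity and (Ext), and the lifted part by contact preservation and (Ext).

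For (ii), beyond (i) it remains to show that $\kappa$ reflects contact, that is, $\kappa(a_1) \ddd_Q \kappa(a_2) \Rightarrow a_1 \ddd_S a_2$. I would argue contrapositively: assuming $a_1 \nd_S a_2$, the extra hypothesis gives $\kappa(a_1)\wedge\kappa(a_2)=0$, and I would rule out both clauses. Clause (a) fails because a witness $q>0$ below $\kappa(a_1)$ and $\kappa(a_2)$ would satisfy $q \leq \kappa(a_1)\wedge\kappa(a_2)=0$. Clause (b) fails because witnesses $a_1',a_2'$ with $a_1' \ddd_S a_2'$, $\kappa(a_1')\leq\kappa(a_1)$ and $\kappa(a_2')\leq\kappa(a_2)$ would, $\kappa$ being an order embedding, give $a_1'\leq a_1$ and $a_2'\leq a_2$, whence (Ext) for $\ddd_S$ forces $a_1 \ddd_S a_2$, a contradiction. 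This simultaneous elimination is the crux of the argument and the only place where the full strength of the hypotheses of (ii) is needed: the meet-zero condition disposes of the overlap clause, while the order-embedding property together with (Ext) for $\ddd_S$ disposes of the lifted clause. Combined with (i), this makes $\kappa$ a contact embedding.
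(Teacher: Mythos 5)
Your proposal is correct and follows essentially the same route as the paper's proof: the same case analysis on clauses (a) and (b) for verifying the weak contact axioms and minimality, and the same contrapositive argument for (ii), ruling out (a) via the meet-zero hypothesis and (b) via the order-embedding property together with (Ext) for $\ddd_S$. The only difference is that you spell out details (symmetry, reflexivity, (Ext), nonzeroness) that the paper dismisses as immediate.
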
 

\begin{proof}
(i) Symmetry (in the case of a contact relation),
 reflexivity and \eqref{ext} 
for  $ \delta _Q $ are immediate. 
By assumption, if $a \neq 0$,
then $ \kappa (a) \neq 0$, thus all
$ \delta _Q $-related elements are nonzero, since
all
$ \delta _S$-related elements are nonzero.
Thus $ \delta _Q$ is a weak (pre-)contact on $\mathbf Q$;
 $\kappa$ is a $\delta$-homomorphism
by construction.
Every weak (pre-)contact on $\mathbf Q$ must contain
all  the pairs for which (a) holds, by reflexivity
and \eqref{ext}.
If $\kappa$ is a $\delta$-homomorphism 
from $(S, \delta_S) $ to $(Q, \delta'_Q) $ 
and   
$ a_1  \ddd _S a_2 $,
then necessarily $ \kappa (a_1) \ddd' _Q \kappa (a_2)$.
If (b) holds for $ b_1 $ and $  b_2 $ with respect to $ a_1  $ and $ a_2 $
as above, and $ \delta'_Q$ satisfies \eqref{ext},  
 then $ b_1 \ddd'_Q b_2 $. 
Thus $\delta_Q $ is the smallest weak (pre-)contact on $\mathbf Q$
with the required property.

(ii) Because of (i),
we just need to check that if
$ c_1 \nd_S  c_2$, then 
$ \kappa (c_1) \nd _Q  \kappa  (c_2) $.
By assumption,
$ \kappa (c_1) \kappa  (c_2) =0$,
hence (a) cannot be applied in order to get
$ \kappa (c_1) \ddd _Q  \kappa  (c_2) $.
If (b) were applicable,
 there would be 
$a_1, a_2 \in  S$ 
 such that $ a_1 \ddd_S a_2 $ and
  $ \kappa  (a_1) \leq \kappa ( c_1)$,
 $ \kappa  (a_2) \leq \kappa (c_2)$.
Since $\kappa$ is an order embedding, 
$ a_1 \leq  c_1$ and 
$ a_2 \leq  c_2$.
Thus $ c_1 \ddd_S  c_2$, by \eqref{ext}, a contradiction.
 \end{proof}

\section{A finite axiomatization in the non overlap case} \labbel{finite} 

In this section we  present some slight improvements on
Theorems 3.2 and 4.1
from \cite{cs}.
Moreover, we extend \cite[Theorem 4.1]{cs} 
to the case of not necessarily symmetric relations.
We  also show that a consequence of the axiom of
choice is necessary in some results from \cite{cs}.

In what follows we will
frequently  deal with
the situation in which weak (pre-)contact join-semilattices are embedded
into models with further structure, e.~g.,
 distributive lattices or  Boolean algebras.
Rather than explicitly saying that a weak contact join-semilattice $\mathbf S$  can
be embedded into \emph{the contact  join-semilattice reduct} of  some
contact Boolean algebra $\mathbf  B$,  
  we will simply say that  $\mathbf S$  can be 
\emph{$ \{\delta, {+}\}$-embedded}  into 
$\mathbf  B$.
Notice that, on the other hand, we are not assuming
that embeddings preserve existing meets, or complements, unless
otherwise specified.

\begin{theorem} \labbel{thmb}
If $\mathbf S$ is a weak contact join-semilattice, then the following conditions are equivalent, where embeddings are always intended as 
$ \{\delta, {+}\}$-embeddings.
 \begin{enumerate}
   \item
$\mathbf S$  can be embedded into 
a weak contact Boolean algebra.
\item
$\mathbf S$  can be embedded into 
a weak contact distributive lattice.
\item
$\mathbf S$ satisfies \textup{(D1)}.
  \item
$\mathbf S$  can be embedded into 
a weak contact complete atomic Boolean algebra.  
 \end{enumerate}  

The above equivalences hold if
``weak contact'' is 
 everywhere replaced by ``weak pre-contact''.
\end{theorem}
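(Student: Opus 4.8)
The plan is to prove the cycle of implications $(4)\Rightarrow(1)\Rightarrow(2)\Rightarrow(3)\Rightarrow(4)$. The implications $(4)\Rightarrow(1)\Rightarrow(2)$ are essentially free: a complete atomic Boolean algebra is a Boolean algebra, and the $\{\delta,+\}$-reduct of any weak contact Boolean algebra is a weak contact distributive lattice, so each embedding is also an embedding into the weaker structure (one must only check that the join operation and the contact relation are preserved, which they are since we take reducts of the \emph{same} algebra). The implication $(2)\Rightarrow(3)$ should also be routine: condition \eqref{d1+} (equivalently (D1) by the remark preceding Theorem~\ref{ivan}) is a universal Horn-type statement about $\leq$ and $\nd$ that holds in any weak contact distributive lattice. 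Concretely, in a distributive lattice, from $c_{i,0}\nd c_{i,1}$ one cannot directly conclude anything, so the content is: if $b\le a+c_{i,f(i)}$ for all the finitely many $f$, then distributivity forces $b\le a+\prod_i(c_{i,0}\wedge\text{-type terms})$, and one argues that the ``product'' part is swallowed. I would verify that (D1) is preserved under taking substructures (it is purely universal) and that it holds in every weak contact distributive lattice by a short distributive-lattice computation expanding $\prod_f (a+c_{1,f(1)}+\dots+c_{n,f(n)})$ and using that meets of $\nd$-pairs behave correctly.

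\textbf{The main work is $(3)\Rightarrow(4)$}, and here I would invoke Lemma~\ref{lemcombis} as the engine. Starting from a weak contact semilattice $\mathbf S$ satisfying (D1), the plan is to build a complete atomic Boolean algebra $\mathbf Q$ together with an order embedding $\kappa\colon\mathbf S\to\mathbf Q$, and then apply part (ii) of the lemma to upgrade $\kappa$ to a contact embedding. The natural candidate is to take $\mathbf Q=\mathcal P(X)$ for a suitable set $X$ of ``points'', with $\kappa(a)=\{x\in X:\ x\text{ supports }a\}$; this is the standard Stone-type / prime-filter representation of a (join-)semilattice into a powerset. The set $X$ should be chosen as a family of subsets of $S$ (e.g.\ prime filters, or more combinatorially, ``selectors'' witnessing the failure of $b\le a$) rich enough that $\kappa$ is an order embedding, i.e.\ $a\le b \iff \kappa(a)\subseteq\kappa(b)$.

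\textbf{The crux} — and the step I expect to be the main obstacle — is arranging the hypothesis of Lemma~\ref{lemcombis}(ii): whenever $c_1\nd_S c_2$, the meet $\kappa(c_1)\wedge\kappa(c_2)=\kappa(c_1)\cap\kappa(c_2)$ must equal $0=\varnothing$. In other words, the point set $X$ must separate every non-contact pair, so that no point $x$ simultaneously supports two elements $c_1,c_2$ with $c_1\nd c_2$. This is exactly where condition (D1) must be used: (D1) controls, for finite families of non-contact pairs $c_{i,0}\nd c_{i,1}$, when a ``choice'' $a+\sum_i c_{i,f(i)}$ can cover $b$, and a compactness/finite-consistency argument (using that each point is determined by finitely many constraints, or a maximal-filter extension) should let me build enough points $X$ so that (a) $\kappa$ is an order embedding and (b) the separation property holds. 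Once $X$ is constructed, Lemma~\ref{lemcombis}(i) endows $\mathbf Q=\mathcal P(X)$ with the smallest weak contact making $\kappa$ a contact homomorphism, and part (ii) then certifies that $\kappa$ is a contact \emph{embedding} into the complete atomic Boolean algebra $\mathcal P(X)$, completing the cycle. I would expect the technical heart to be the lemma-style construction of $X$ from (D1) via a prime-ideal/maximal-filter (hence Boolean Prime Ideal Theorem) argument, consistent with the paper's later Proposition~\ref{zf}.
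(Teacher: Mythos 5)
Your proposal is correct in outline, but it takes a genuinely different route from the paper's own proof. The paper does not reconstruct the representation at all: it observes that the theorem was already proved in \cite[Theorem 4.1]{cs} with an extra hypothesis, called (D2--), attached to clause (3), and then gives a short syntactic derivation of (D2--) from (D1), using the implication (D1) $\Rightarrow$ \eqref{d1+} from \cite[Lemma 2.3]{cs} together with \eqref{ext}; all the representation-theoretic content is delegated to the cited paper. You instead prove $(3)\Rightarrow(4)$ from scratch: points are prime, upward closed subsets of $S$ containing at most one member of each $\nd$-pair, $\kappa(a)=\{x\in X : a\in x\}$ lands in the complete atomic Boolean algebra $\mathcal P(X)$, and Lemma~\ref{lemcombis}(ii) upgrades $\kappa$ to a contact embedding. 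This is essentially the argument the paper attributes implicitly to \cite{cs} (the paper states that Lemma~\ref{lemcombis} is implicit in the proof of \cite[Theorem 4.1]{cs}), so your route is self-contained where the paper's is a two-step reduction, at the price of redoing the hard work. Your sketch leaves the crux unexecuted, but it does go through, and in fact more cheaply than you suggest: given $s \nleq t$, take by Zorn's lemma a maximal $E \subseteq S$ such that $s \nleq t + e_1 + \dots + e_k$ for every finite $\{e_1, \dots, e_k\} \subseteq E$; if neither member of a pair $c_0 \nd c_1$ could be added to $E$, there would be a common finite $F \subseteq E$ with $s \leq t + \textstyle\sum F + c_0$ and $s \leq t + \textstyle\sum F + c_1$, and (D1), applied with $t + \sum F$ in the role of $a$, would give $s \leq t + \sum F$, a contradiction; hence a maximal $E$ decides every $\nd$-pair, and $x = \{\, u \in S : u \nleq t + \textstyle\sum F \text{ for all finite } F \subseteq E \,\}$ is the required point. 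Note that this one-pair-at-a-time argument uses only (D1) itself, not the full strength of \eqref{d1+}, and the appeal to Zorn/BPI is exactly where Proposition~\ref{zf} says a choice principle must enter, as you anticipated. Finally, your $(2)\Rightarrow(3)$ step, which you state vaguely, reduces to one line: $c_0 \nd c_1$ forces $c_0 \wedge c_1 = 0$ by reflexivity and \eqref{ext}, whence $b \leq (a+c_0)\wedge(a+c_1) = a + (c_0 \wedge c_1) = a$ by distributivity.
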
 

\begin{proof}
For contact relations, the theorem has been proved in \cite[Theorem 4.1]{cs} 
with an additional assumption  in clause (3).
Hence it is enough to prove that (D1) implies the additional
assumption, which we reproduce here, relabeling some
variables in order to avoid a notational clash in the proof.
\begin{align} \labbel{d2-}    \tag{D2-} 
&\begin{aligned} 
& \text{For every positive } n \in \mathbb N \text{ and } 
a^*,b^*, c_{0,0}, c_{0,1}, c_{1,0}, c_{1,1},  \dots, c_{n,0},  c_{n,1} \in S,
\\
& \text{IF } c_{0,0} \nd c_{0,1},  c_{1,0}\nd c_{1,1},   \dots,   c_{n,0}\nd c_{n,1} 
\text{ and, for every $f: \{ 1, \dots , n\} \to \{ 0, 1  \}$,}
\\
&\text{both }
a^* \leq   c_{0,0}+ c_{1,f(1)}  + \dots + c_{n,f(n)} \text{ and } 
b^* \leq  c_{0,1}+  c_{1,f(1)}  + \dots + c_{n,f(n)},
\\
& \text{THEN  } a^* \nd b^*.  
\end{aligned} 
   \end{align}  
In \cite[Lemma 2.3]{cs}
we have showed that  (D1)  implies \eqref{d1+},
hence, assuming the premises of \eqref{d2-}, 
we get 
$b ^* \leq   c_{0,1}   $ by taking $c_{0,1} $
in place of $a$ in   \eqref{d1+}. Similarly, 
 $a^* \leq   c_{0,0}   $. Since, by the assumptions in (D2-),
$c_{0,0} \nd c_{0,1}$, we get $a^* \nd b^*$ by   \eqref{ext}. 

We now prove the last statement.
The implications (4) $\Rightarrow $  (1) $\Rightarrow $  (2)
are straightforward.
The remaining implications can be obtained by analyzing the proof 
for the case of a weak contact, observing that we have
 not actually used symmetry of
$\delta$ in the above argument and in \cite[Theorem 4.1]{cs}. 
Anyway, we are going to show that the result for a pre-contact
follows from the already proved result for a contact.
We will use the following claim, whose proof is
elementary.

\begin{claim} \labbel{cl}  
If $\delta$ is a weak pre-contact relation on some poset $\mathbf P$,
define $\delta^s$ on $P$ by
$a \ddd^s b$ if   both $a \ddd b$ and $b \ddd a$.
Then  $\delta^s$ is a weak contact relation on $\mathbf P$.
Moreover, $\delta$ satisfies \textup{(D1)}  if and only if 
$\delta^s$ satisfies \textup{(D1)}.
\end{claim}    

Now we can prove (2) $\Rightarrow $  (3) in the case of a weak pre-contact.
Suppose that the weak pre-contact join-semilattice
$\mathbf S$  can be embedded into 
a distributive lattice $\mathbf L$ with
weak pre-contact $\delta_L$.
If we define $\delta_L^s$ as in the Claim, then, by the Claim,
 $\delta_L^s$ is a weak contact relation on $\mathbf L$, thus, by
the theorem in the case of a weak contact, $\mathbf L$ with
$\delta_L^s$ satisfies (D1). By the last statement in the Claim,
$\mathbf L$ with
$\delta_L$ satisfies (D1).
Property (D1) is clearly preserved under substructures
and isomorphism, hence $\mathbf S$ satisfies (D1), as well.
   
To prove  (3) $\Rightarrow $  (4),
assume that  the join-semilattice
$\mathbf S$  with weak pre-contact $\delta$ satisfies (D1).
If we replace $\delta$ with $\delta^s$,
we get a  weak contact join-semilattice
$\mathbf S^s$  satisfying (D1), by the Claim,
hence $\mathbf S^s$ can be embedded
into 
a weak contact complete atomic Boolean algebra
$\mathbf Q$, by the already
proved version of the theorem, dealing with weak contact relations.
If $\kappa$ is the above embedding, then 
$\kappa$ satisfies the assumptions in (ii) in Lemma \ref{lemcombis} 
with respect to $\delta$. Indeed, if $a_1 \nd a_2$,
then also  $a_1 \nd^s a_2$, by definition, hence 
$ \kappa (a_1) \nd_Q \kappa (a_2)$, where 
$\delta_Q$ is the weak contact on $\mathbf Q$.
Thus the meet of   $ \kappa (a_1)  $ and $  \kappa (a_2)$
in $\mathbf Q$ is $0$, by \eqref{ext}.

By   Lemma \ref{lemcombis}(ii)
we can thus endow the Boolean reduct of $\mathbf Q$
with a weak pre-contact $\delta^p$ in such a way that $\kappa$
is a $\delta$-embedding from $\mathbf S$ with $\delta$ 
to $\mathbf Q$
with  $\delta^p$.
The conclusion follows, since the join-semilattice structures
on $\mathbf S$ and $\mathbf Q$ are not affected by the
additional
 contact structure, and $\kappa$ was assumed to be
a semilattice embedding.  
\end{proof}

Since (D1), which is  the instance $n=1$ of \eqref{d1+}, is clearly expressible
as a first-order sentence, we immediately get the following corollary.  

\begin{corollary} \labbel{fa}
The classes of  weak contact and weak pre-contact
 join-semilattices
satisfying one of (equivalently, all of) 
the conditions in Theorem \ref{thmb}
are finitely first-order axiomatizable. 
 \end{corollary}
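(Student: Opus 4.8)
The plan is to derive Corollary \ref{fa} directly from Theorem \ref{thmb} by exhibiting an explicit first-order sentence whose models are precisely the weak contact semilattices satisfying the equivalent conditions. Since Theorem \ref{thmb} establishes that all four conditions are equivalent to condition (3), namely that $\mathbf S$ satisfies (D1), it suffices to observe that (D1) — being the case $n=1$ of \eqref{d1+} — is a single first-order sentence.

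First I would recall that the underlying class of weak contact semilattices is itself finitely axiomatizable: the semilattice axioms (associativity, commutativity, idempotence, and the axioms governing $0$ as the minimum) are finitely many equations, and the defining properties of a weak contact relation — symmetry, reflexivity on $S \setminus \{0\}$, and \eqref{ext} — are each expressible by a single first-order sentence (with the convention that $a \leq b$ abbreviates $a + b = b$). Thus the base class is captured by a finite set of first-order axioms.

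Next I would write out (D1) explicitly as a first-order formula. Unwinding the $n=1$ instance of \eqref{d1+}, the condition reads: for all $a, b, c_0, c_1 \in S$, if $c_0 \nd c_1$ and both $b \leq a + c_0$ and $b \leq a + c_1$ hold, then $b \leq a$. Every ingredient here is first-order over the language $\{+, 0, \delta\}$: the relation $\delta$ is primitive, its negation $\nd$ is a negated atomic formula, and each inequality $\leq$ is the equation just described. Since this is a single universally quantified sentence over finitely many variables, (D1) is manifestly first-order.

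The remaining step is purely formal. Adjoining the finite axiomatization of the weak contact semilattice base class to the single sentence expressing (D1) yields a finite set of first-order sentences whose models are exactly the weak contact semilattices satisfying condition (3) of Theorem \ref{thmb}, which by that theorem coincides with the class satisfying any (and hence all) of the conditions listed there. I do not anticipate any genuine obstacle: the entire content is that the distinguished condition among the four equivalents happens to be first-order, and all the substantive work lies in Theorem \ref{thmb} itself. The only point requiring mild care is to confirm that the base axioms for weak contact semilattices are indeed finitely many first-order sentences, which is routine once one notes that $\leq$ is definable by an equation.
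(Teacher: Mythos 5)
Your proposal is correct and follows essentially the same route as the paper, which justifies the corollary by the single observation that (D1) (the case $n=1$ of \eqref{d1+}) is expressible as a first-order sentence and then invokes the equivalences of Theorem \ref{thmb}. Your additional care in spelling out that the base class of weak contact semilattices is itself finitely axiomatizable (with $\leq$ defined by $a+b=b$) is a point the paper leaves implicit, but it is the same argument.
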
 

In the next section we will show that 
no finite axiomatization exists 
for the class of weak contact join-semilattices
embeddable in a contact Boolean algebra with
\emph{overlap} contact relation, which
by \cite[Theorem 3.2 and Corollary 3.4]{cs}
 is precisely the class of Ivanova
contact join-semilattices.

We now present a slight extension of \cite[Theorem 3.2]{cs}, 
giving a few further characterizations of Ivanova
contact join-semilattices.

If $A$ is a set, $\mathcal P(A)$ denotes the power set of $A$. 
If $X$ is a topological space with closure $K$, the \emph{elementary 
proximity on $X$} \cite{DC} 
is the contact relation  on  $\mathcal P (X)$
 defined by $a \ddd_K b$
if $Ka\cap Kb \neq \emptyset $, for $a, b \subseteq X$.
A contact semilattice is an \emph{elementary topological
contact semilattice} if it has the form 
$(\mathcal P(X), {\cup}, \emptyset,  \delta_K ) $ with $\delta_K$  
as above, for some topological space $X$.  
Note that the above notion of contact is distinct from
another topological notion of contact which is mostly used 
in region based theory of space, and 
which  is defined on the algebra
of regular open, or regular closed subsets of $X$.
See e.~g.  \cite[p. 1221]{I}. 

Recall that a \emph{pre-closure operation} $K$ on some poset
$\mathbf P$ is a unary,  extensive and isotone operation
(the ``pre'' here bears no connection with the ``pre''  
in pre-contact).
If $\mathbf P$ has a minimum element $0$, we will also require that
$K$ is \emph{normal}, that is, $K0=0$.   
If $K$ is also idempotent, it is called a \emph{closure operation}. 
If $\mathbf P$ is a poset with $0$ and with a normal pre-closure
operation $K$, the \emph{associated elementary weak contact relation} 
is defined by
 $a \ddd_K b$ if there is $p \in P$, $p >0$ such that both
 $p \leq Ka$ and $p \leq Kb$. In the above situation, we will say that 
$(P, {\leq} , 0, \delta_K )$ is the
  \emph{elementary contact poset associated to} $\mathbf P$.
We define similar notions for join-semilattices. In the case of semilattices,
a closure operation $K$ is \emph{additive} if $K(x+y)=Kx+Ky$
holds identically.   Recall that a $\delta$-embedding
between structures endowed with a weak contact relation
is an injective function $\varphi$  such that 
$a \ddd b$ if and only if $ \varphi (a) \ddd \varphi (b)$,
for all $a, b$ in the domain.   

\begin{theorem} \labbel{thm}
If $\mathbf S$ is a weak contact join-semilattice, then the following conditions are equivalent, where embeddings are always intended as 
$ \{\delta, {+}\}$-embeddings.
 \begin{enumerate}
\item
$\mathbf S$  can be embedded into 
a Boolean algebra with overlap contact.  
  \item
$\mathbf S$  can be embedded into 
 an additive contact distributive lattice.  
  \item
$\mathbf S$  is an Ivanova contact join-semilattice. 
\item
$\mathbf S$  can be embedded into 
a complete atomic Boolean algebra with overlap contact.  
\item
$\mathbf S$  can be embedded into 
an elementary topological
contact semilattice.
\item
$\mathbf S$  can be embedded into 
the elementary contact join-semilattice associated to some distributive
lattice with additive closure.   
\end{enumerate}  
\end{theorem} 

\begin{proof}
The equivalences of (1) - (4)
follow from \cite{I}; see also \cite[Theorem 3.2]{cs}.

(4) $\Rightarrow $  (5) Since
a complete atomic Boolean algebra $\mathbf  B$ is isomorphic
to a field of sets, say, $\mathcal P(X)$,
if we give $X$ the discrete topology,
the overlap contact on $\mathbf  B$ is the same as the 
elementary proximity on the topological space  $X$.

(5) $\Rightarrow $  (6)  $\Rightarrow $  (2) are elementary.
Indeed, in a distributive lattice with an
additive closure operation $K$, the associated
elementary contact relation is additive.  
See the next lemma for a slightly more general fact.
\end{proof}

Recall that a  lattice with $0$ is \emph{meet semidistributive at $0$} 
if, for all elements $p,q,r$,  $pr=0$ and $qr=0$ imply $(p+q)r=0$.
In particular, a distributive lattice with $0$
is meet semidistributive at $0$ (actually, meet semidistributive, but
we will not use this intermediate notion here).
More generally, let us say that a join-semilattice with $0$
is  \emph{$2$-semidistributive at $0$} 
if, whenever the meets of 
$p , r$ and 
of $q, r $
both exist and are equal to    $0$,
the meet of 
$p+q,  r $  exists and is equal to    $0$.

\begin{lemma} \labbel{semid}
Suppose that  $\mathbf S$ is a join-semilattice with $0$ and 
with a normal additive pre-closure $K$.
If $\mathbf S$ is $2$-semidistributive at $0$,
then the elementary contact associated to $K$
 is additive.
 \end{lemma} 

\begin{proof}
 If $a \nd_K b$   and
 $a \nd_K c$ 
then the meets of 
$Ka, Kb $ and 
of $Ka, Kc $
 exist and are equal to  $0$, by the definition of $ \delta _K$. 
By $2$-semidistributivity at $0$, 
the meet of 
$Ka $ and $Kb+Kc $  exists and is equal to    $0$.
Since $K$ is additive, $Kb+Kc=K(b+c)$ and again the definition 
of $ \delta _K$ gives   $a \nd_K(b+ c)$.
 \end{proof}    
 
Note that a generalization of Theorem \ref{thm},
as it stands, is not possible for weak pre-contact relations,
since the overlap relation is necessarily symmetrical.
It is an open problem to characterize  weak pre-contact join
semilattices satisfying Condition 2 in \ref{thm}, where ``contact''
is replaced by ``pre-contact''.   As far as Clauses (5), (6)
are concerned, note that, given a poset $\mathbf P$ with a pre-closure
operation, we obtain a weak pre-contact relation $\delta$ 
by setting $ a \ddd b$ if there is $p \in P$, $p>0$ such that    
$p \leq a$ and $p \leq Kb$. Again, it is an open problem
to characterize those  weak pre-contact join
semilattices which are representable in such a way, possibly,
when $\mathbf P$  is a distributive lattice and $K$ is an additive
closure. Note also that the construction in Claim \ref{cl}
does not necessarily preserve additivity (when dealing with
pre-contact relations, \emph{additivity} means that both \eqref{add} 
and its symmetric version hold). Indeed, on a finite Boolean algebra,
an additive pre-contact is determined by those atoms which are 
in pre-contact. Thus, if we consider the $8$-element Boolean algebra $\mathbf  B_8$ 
with atoms $a$, $b$ and $c$, the conditions $ a \ddd b$ and
$c \ddd a$  uniquely determine an additive pre-contact on $B_8$. 
However, if $\delta^s$ is defined as in Claim \ref{cl}, then
$\delta^s$ is not additive. Indeed, $a \ddd^s b{+}c$, but
neither $a \ddd^s b$, nor  $a \ddd^s c$.     

As we pointed out in \cite{cs},
we do not need the axiom of choice 
in order to prove the equivalences of (1) - (3)
in Theorems \ref{thmb} and \ref{thm}. 
However, in both proofs we needed the Stone Representation Theorem, 
which is equivalent to the Prime Ideal Theorem \cite[Form 14]{HR},
in order to get the equivalence with (4). 
In the next proposition we point out
that in both cases the equivalence of (1) and (4)
is indeed equivalent to the  Prime Ideal Theorem.

\begin{proposition} \labbel{zf}
In ZF, the Zermelo-Fraenkel theory without the axiom of choice,
the following statements are equivalent.
 \begin{enumerate}[(A)]   
\item 
The  Prime Ideal Theorem \cite[Form 14]{HR}.
\item
The implication (1) $\Rightarrow $  (4) in Theorem \ref{thmb} holds.
\item
The implication (1) $\Rightarrow $  (4) in  Theorem \ref{thm} holds. 
  \end{enumerate} 
 \end{proposition}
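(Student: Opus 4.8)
The plan is to prove the four implications $(A)\Rightarrow(B)$, $(A)\Rightarrow(C)$, $(B)\Rightarrow(A)$ and $(C)\Rightarrow(A)$, which together give the stated equivalence. The two forward implications require essentially no new work: as noted in \cite{cs}, the only appeal to the Axiom of Choice in the proofs of the implication $(1)\Rightarrow(4)$, both in Theorem \ref{thmb} and in \cite[Theorem 3.2]{cs}, is through the Stone Representation Theorem, used to embed a Boolean algebra into the power set of its space of ultrafilters. Since the Prime Ideal Theorem \cite[Form 14]{HR} yields Stone representation, I would simply observe that, assuming $(A)$ and working in ZF, those proofs go through unchanged, which gives $(A)\Rightarrow(B)$ and $(A)\Rightarrow(C)$. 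The only point to isolate is precisely where each argument invokes an ultrafilter, to confirm that no further use of choice is hidden elsewhere.

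The real content lies in the reverse implications, which I would obtain from a single construction. Let $\mathbf A$ be an arbitrary nontrivial Boolean algebra and equip it with its overlap relation $\delta$, which by the discussion in Section \ref{prel} is a weak contact; moreover $(\mathbf A,{+},0,\delta)$ is an \emph{overlap} contact Boolean algebra, and it is an Ivanova contact join-semilattice as well, since (D1) and each \eqref{d2n} are readily verified from the finite distributive law. In either reading, the semilattice reduct $\mathbf S$ of $\mathbf A$ satisfies hypothesis $(1)$ of the relevant theorem, because it embeds into $\mathbf A$ itself. Applying $(B)$, respectively $(C)$, then yields a contact embedding $e$ of $\mathbf S$ into a weak contact, respectively overlap, complete atomic Boolean algebra $\mathbf C$; and since in ZF every complete atomic Boolean algebra is isomorphic to the full power set of its set of atoms, I may take $\mathbf C=(\mathcal P(X),\cup,\emptyset,\delta_C)$ for a suitable set $X$.

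The step I expect to be the crux — and what makes the merely weak-contact case $(B)$ succeed, not only the overlap case $(C)$ — is the observation that overlap is contained in every weak contact relation. Indeed, if $U\cap V\neq\emptyset$ then any $x\in U\cap V$ satisfies $\{x\}\leq U$, $\{x\}\leq V$ and $\{x\}\ddd_C\{x\}$ by reflexivity, so $U\ddd_C V$ by \eqref{ext}; contrapositively, $U\nd_C V$ forces $U\cap V=\emptyset$. Applying this to a complement: since $a\wedge\neg a=0$ in $\mathbf A$ we have $a\nd\neg a$, and because $e$ is a contact embedding (so $a\nd\neg a$ gives $e(a)\nd_C e(\neg a)$) we obtain $e(a)\cap e(\neg a)=\emptyset$; as $e$ preserves joins, $e(a)\cup e(\neg a)=e(1)$. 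Thus $e(a)$ and $e(\neg a)$ partition $e(1)$, which is nonempty because $e$ is injective and $0\neq1$.

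Finally I would fix any $x\in e(1)$ and put $U_x=\{\,a\in A:x\in e(a)\,\}$, then check that $U_x$ is an ultrafilter of $\mathbf A$: it is proper since $e(0)=\emptyset$, upward closed since $e$ is order preserving, and for each $a$ the partition of $e(1)$ forces exactly one of $a,\neg a$ into $U_x$; it is closed under meets because $a,b\in U_x$ together with $a\wedge b\notin U_x$ would give $\neg a\vee\neg b=\neg(a\wedge b)\in U_x$, whence $x\in e(\neg a)\cup e(\neg b)$ by join preservation, contradicting $a,b\in U_x$. Then $A\setminus U_x$ is a prime ideal of $\mathbf A$, and since $\mathbf A$ was an arbitrary nontrivial Boolean algebra this establishes $(A)$. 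The remaining tasks are purely routine: recording that a Boolean embedding is an order embedding, so that the properties of $e$ used above are exactly those guaranteed by the definition of contact embedding and by Lemma \ref{lemcombis}, and the ZF-provable identification of a complete atomic Boolean algebra with the power set of its atoms.
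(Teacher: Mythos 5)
Your proposal is correct, and the forward implications (A)$\Rightarrow$(B),(C) are handled exactly as in the paper: the Prime Ideal Theorem is the only fragment of choice used in the proofs of (1)$\Rightarrow$(4). For the reverse implications your construction starts the same way as the paper's: take an arbitrary nontrivial Boolean algebra $\mathbf A$ equipped with the overlap contact, apply the hypothesis to get a contact embedding $e$ of its $\{\delta,+\}$-reduct into a complete atomic algebra, and exploit the fact that complementary pairs of $\mathbf A$, being non-contact pairs, are sent to pairs with meet $0$ (reflexivity plus \eqref{ext}) whose join is $e(1)$. The difference is the endgame. The paper upgrades $\chi=e$ to a full Boolean embedding (meet being definable from join and complementation by De Morgan) and then concludes by quoting the known ZF-equivalence of the Stone Representation Theorem with the Prime Ideal Theorem \cite[Form 14]{HR}; you instead pick a point $x\in e(1)$ (equivalently, an atom below $e(1)$) and verify directly that $U_x=\{\,a\in A : x\in e(a)\,\}$ is an ultrafilter, whose complement is the desired prime ideal. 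Your route is more self-contained, since it derives the Prime Ideal Theorem itself rather than invoking the Stone$\Leftrightarrow$PIT equivalence as a black box, and it is also insensitive to whether the embedding preserves the top element: the paper's step ``$\chi(c)+\chi(c')=1$'' tacitly requires $\chi(1)=1$ (or a passage to the relative algebra below $\chi(1)$), whereas your choice of $x$ inside $e(1)$ makes this issue disappear. What the paper's approach buys in exchange is brevity. Two cosmetic remarks: your side claim that $\mathbf A$ with overlap contact is an Ivanova contact join-semilattice is not needed (for (C) it suffices that $\mathbf A$ itself witnesses clause (1) of \cite[Theorem 3.2]{cs}), and your appeal to the ZF-identification of a complete atomic Boolean algebra with the power set of its atoms could be avoided altogether by working with an atom $p\leq e(1)$ and the filter $\{\,a\in A : p\leq e(a)\,\}$, using the same reflexivity-plus-\eqref{ext} argument for disjointness.
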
 

 \begin{proof}
We needed only the 
 Prime Ideal Theorem in the proofs of (1) $\Rightarrow $  (4)
in \cite[Theorems 3.2]{cs} and \cite[Theorems 4.1]{cs}, improved here
in Theorems \ref{thm} and  \ref{thmb}, respectively.
Hence (A) implies both (B) and (C).

Suppose that (B) holds and let $\mathbf  C$ be a Boolean algebra.
Endow $\mathbf  C$ with the overlap contact.
If  the implication (1) $\Rightarrow $  (4) in Theorem \ref{thmb} holds,
then $\mathbf  C$ can be $\{ {  \delta }, {+} \}$-embedded
into some     weak contact complete atomic Boolean algebra $\mathbf  D$.
We check that this embedding, call it $\chi$,  is also a Boolean embedding.
Indeed, if $c \in C$ and $c'$ is the complement of 
$c$, then $c \nd_C c'$, since $\delta_C$ is overlap,
hence $\chi(c) \nd_D  \chi(c') $, since   $\chi$ is 
a $\delta$ embedding. By reflexivity of $\delta_D$ and \eqref{ext},
 $\chi(c)   \chi(c') =0 $; moreover, 
$\chi(c) + \chi(c')=1 $, since $\chi$ is a join-semilattice 
homomorphism. Hence $\chi(c') $ is the complement 
of $  \chi(c) $ in $\mathbf  D$, that is, $\chi$  
 preserves complementation.
By De Morgan law, meet is expressible in terms of join
and complementation, hence  
$\chi$ is a Boolean 
 homomorphism.
Taking the Boolean reduct of $\mathbf  D$,
we get an embedding of $\mathbf  C$ into a complete atomic Boolean algebra.
We have proved the Stone Representation Theorem, 
which is equivalent to the Prime Ideal Theorem \cite[Form 14]{HR},
hence (B) $\Rightarrow $  (A) follows.

Now note that $\mathbf  C$
in the above argument has indeed overlap contact by
construction, hence the argument provides also
(C) $\Rightarrow $  (A).
 \end{proof}

\section{Ivanova contact join-semilattices are 
not finitely axiomatizable} \labbel{nofin}

\begin{theorem} \labbel{dn}
For every $n \geq 2$,
there is a contact join-semilattice satisfying 
\eqref{d1+} and \textup{(D2$_{m}$)},
for every $m<n$,  but not satisfying
\eqref{d2n}. 
 \end{theorem}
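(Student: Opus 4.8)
The plan is to produce, for each fixed $n \geq 2$, a single explicit finite structure and to read off all three requirements from its combinatorics. I would work inside the Boolean algebra $\mathcal P(\{0,1\}^n)$. Write $c_{i,0}=\{x : x_i=0\}$ and $c_{i,1}=\{x:x_i=1\}$ for the $2n$ half-cubes, and let $E$ and $O$ be the sets of even- and odd-weight vertices. Let $\mathbf Q$ be the sub-join-semilattice of $(\mathcal P(\{0,1\}^n),\cup,\emptyset)$ generated by $\{c_{i,j}\}\cup\{E,O\}$, and define $\delta$ on $\mathbf Q$ by $a\ddd b$ iff $a\cap b\neq\emptyset$, or $E\subseteq a$ and $O\subseteq b$, or $E\subseteq b$ and $O\subseteq a$. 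This is exactly the contact produced by Lemma \ref{lemcombis}(i) from the two-element source carrying a single contact $s_1\ddd s_2$ sent to $E,O$; in particular $\delta$ is a weak contact.

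For the failure of \eqref{d2n} I would use the witnesses $a=E$, $b=O$ together with the half-cubes $c_{i,j}$. One checks $c_{i,0}\nd c_{i,1}$ (they are disjoint and the forced contact does not reach them, since $E\nsubseteq c_{i,j}$ by \eqref{ext}), while $E\ddd O$ holds by fiat. The covering hypothesis holds because $c_{1,f(1)}+\dots+c_{n,f(n)}$ is the whole cube minus the single vertex $\bar f$, and such a set contains $E$ or $O$ according as $\bar f$ is odd or even. For \eqref{d1+} I would note that the inclusion of $\mathbf Q$ into $(\mathcal P(\{0,1\}^n),\delta')$, where $\delta'$ is defined by the same recipe on the full power set, is a contact embedding into a weak contact Boolean algebra; Theorem \ref{thmb} then gives (D1), and (D1) implies \eqref{d1+}.

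The substance of the argument, and the step I expect to be the main obstacle, is verifying \eqref{d2n} with $n$ replaced by any $m<n$. The key is a structural lemma: \emph{the only pairs $(d,d')$ of nonzero elements of $\mathbf Q$ with $d\nd d'$ --- equivalently $d\cap d'=\emptyset$ together with the failure of $E\subseteq d,\,O\subseteq d'$ and of its symmetric --- are the half-cube pairs $(c_{i,0},c_{i,1})$}. Indeed every element of $\mathbf Q$ is a union of generators; if such a union contains all of $E$ its only possible nonzero disjoint partner is $O$ (the unique generator missing every even vertex), and that is the forbidden leak pair, and symmetrically for $O$; the remaining elements are unions of half-cubes, and two of them are disjoint exactly when their complementary subcubes cover the cube, which a short computation shows happens only for a complementary pair $c_{i,0},c_{i,1}$.

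Granting this, any purported $m$-certificate uses only half-cube pairs (at $\le m$ coordinates) together with degenerate pairs having a $0$ side, and I would derive a contradiction from $a'\ddd b'$ by exhibiting an uncovered cell $f$. If $a'\cap b'\neq\emptyset$, pick $z$ in the intersection and let $f$ select, in each pair, a side avoiding $z$; then $z$ lies in no cell, so neither $a'$ nor $b'$ is below it. If instead the contact is forced, say $E\subseteq a'$ and $O\subseteq b'$, let $f$ select the empty side of every degenerate pair; the resulting cell is the complement of a subcube of codimension $\le m<n$, hence of dimension $\ge1$, so by parity it contains both an even and an odd vertex, and $a',b'$ again both fail to lie below it. Either way the covering hypothesis of \eqref{d2n} is violated, so its conclusion $a'\nd b'$ holds vacuously; this is precisely \emph{(D2$_m$)}. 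The essential point, and the reason the ambient Boolean algebra itself would not work, is that there one can certify any added contact with only two pairs by placing all of $E$ on one side and all but one vertex of $O$ on the other (and symmetrically); passing to the generated sub-semilattice removes these ``hold-out'' partners and forces every certificate to use all $n$ coordinate cuts.
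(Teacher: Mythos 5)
Your construction is the paper's own in concrete form: the free Boolean algebra on $n$ generators is exactly $\mathcal P(\{0,1\}^n)$ with the generators as half-cubes, the paper's $\bar b$ and $\bar a$ are your $E$ and $O$, and the contact on the generated join-subsemilattice is the same relation (the paper stipulates that the complementary half-cube pairs are the only non-contact pairs; your structural lemma, which is correct, shows that your overlap-or-forced definition yields exactly that relation). The witnesses for the failure of \eqref{d2n} and the derivation of (D1), hence \eqref{d1+}, via Lemma \ref{lemcombis} and Theorem \ref{thmb} also match the paper. Where you genuinely depart is in the heart of the proof, the verification of (D2$_{m}$) for $m<n$: the paper computes inside the Boolean algebra $\mathbf C$ (the hypotheses give $ba=0$, hence $b\le a'$, then an atom-by-atom case analysis, plus an incomparability argument showing that no sum $c_{1,f(1)}+\dots+c_{m,f(m)}$ lies above $\bar a$ or $\bar b$), whereas you argue contrapositively, producing from any contact $a'\ddd b'$ an explicit $f$ whose cell covers neither side --- a common point in the overlap case, a parity argument in a subcube of dimension $\ge n-m\ge 1$ in the forced case. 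Your route is more geometric and transparent; the price is that you must prove the classification of non-contact pairs, which the paper gets for free by fiat (the paper instead pays by having to check the hypothesis of Lemma \ref{lemcombis}(ii)).

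Two small repairs are needed. First, your $\delta$ on $\mathbf Q$ is \emph{not} ``exactly the contact produced by Lemma \ref{lemcombis}(i)'' when the lemma is applied with target $\mathbf Q$: clause (a) there asks for a nonzero element of $Q$ below both arguments, and, for instance, $c_{1,0}$ and $c_{2,0}$ intersect as sets but have no nonzero common lower bound in $Q$, so the lemma's contact on $\mathbf Q$ would declare them unrelated, unlike yours. This is harmless: your $\delta$ is the restriction to $\mathbf Q$ of $\delta'$, the lemma's contact on the full power set (where clause (a) is genuine overlap), and that is all your argument ever uses --- it makes $\delta$ a weak contact and the inclusion a contact embedding. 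Second, in the forced case of your (D2$_{m}$) argument you must choose the sides of the half-cube pairs consistently when the same coordinate pair occurs more than once in the certificate (possibly with its two sides swapped); an inconsistent choice makes the cell the whole cube, which violates nothing. The paper's explicit reduction to the case where ``each pair never repeats'' handles this; one sentence fixes it in your write-up.
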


\begin{proof} 
Fix $n \geq 2 $ and let $\mathbf  C$ be the Boolean algebra 
freely generated by $n$ generators
$ c_{1,0}, c_{2,0},  
\dots, \allowbreak  c_{n,0}$ and let 
$ c_{1,1} = c' _{1,0}$, \dots, 
$ c_{n,1} = c' _{n,0}$.  Recall that 
a prime denotes Boolean complement.
By construction,  the elements 
$ c_{1,0}, c_{2,0},  
\dots, \allowbreak  c_{n,0}, c_{1,1}, c_{2,1}, \dots, 
 c_{n,1}$ are pairwise incomparable
 with
respect to the standard Boolean order.
Let
\begin{align*} 
\bar  a &= \prod \{ \,    c_{1,f(1)}  + \dots + c_{n,f(n)}  \mid
f: \{ 1, \dots , n\} \to \{ 0, 1  \}, 
  f(1) +  \dots + f(n) \text{ is odd}  \,\},
\\
 \bar b &= \prod \{ \,    c_{1,f(1)}  + \dots + c_{n,f(n)}  \mid
f: \{ 1, \dots , n\} \to \{ 0, 1  \}, 
  f(1) + \dots + f(n) \text{ is even}  \,\}.
\end{align*}
\begin{claim*}
$ \bar{b}$ is the complement of $ \bar{a}$ in $\mathbf  C$, hence,
if $n$ is odd 
\begin{align*} 
\bar  a &= \sum \{ \,    c_{1,f(1)} c_{2,f(2)}   \dots  c_{n,f(n)}  \mid
f: \{ 1, \dots , n\} \to \{ 0, 1  \}, 
  f(1) +  \dots + f(n) \text{ is odd}  \,\},
\\
 \bar b &= \sum \{ \,    c_{1,f(1)}c_{2,f(2)}   \dots  c_{n,f(n)}  \mid
f: \{ 1, \dots , n\} \to \{ 0, 1  \}, 
  f(1) + \dots + f(n) \text{ is even}  \,\},
\end{align*} 
and if $n$ is even the above two displayed identities hold with
$ \bar{a}$ and $ \bar{b}$ swapped.    
  \end{claim*}     

To prove the first statement in the claim, we have to show that 
$ \bar{a}  \bar{b} =0$ and $ \bar{a} + \bar{b} =1$.
Indeed,
$\bar a \bar  b$ is the product of all the sums of the form 
$ c_{1,f(1)}  + \dots + c_{n,f(n)}$,
with $f: \{ 1, \dots , n\} \to \{ 0, 1  \}$, thus,  by distributivity
\begin{align*} 
0 &= c_{1,0}c_{1,1}+ c_{2,0}c_{2,1}+  \dots+ c_{n,0}c_{n,1}
\\
&=(c_{1,0}+ c_{2,0}c_{2,1}+  \dots+ c_{n,0}c_{n,1})
(c_{1,1}+ c_{2,0}c_{2,1}+  \dots+ c_{n,0}c_{n,1})
\\
&=
(c_{1,0}+ c_{2,0}+  c_{3,0}c_{3,1} + \dots+ c_{n,0}c_{n,1})
(c_{1,0}+ c_{2,1}+  c_{3,0}c_{3,1} +  \dots+ c_{n,0}c_{n,1})
\\
& \phantom{ = \ \ }
(c_{1,1}+ c_{2,0}+  c_{3,0}c_{3,1} + \dots+ c_{n,0}c_{n,1})
(c_{1,1}+ c_{2,1}+  c_{3,0}c_{3,1} +  \dots+ c_{n,0}c_{n,1}) =
 \dots 
\\
& = 
 \prod\{ \,    c_{1,f(1)}+c_{2,f(2)}+   \dots + c_{n,f(n)}  \mid
f: \{ 1, \dots , n\} \to \{ 0, 1  \} \,\}
= \bar a \bar  b . 
  \end{align*}
Moreover, the sum of each factor in the formula defining 
$\bar{b}$ with
each factor in the formula defining 
$\bar{a}$ gives $1$, hence
 $ \bar{a}+ \bar{b}=1 $, again by distributivity.
 
The displayed formulas in the claim then follow by  De Morgan's
laws. For example, by what we have just proved,
$ \bar{a} = \bar{b}' =   
\left( \prod_f    c_{1,f(1)}  + \dots + c_{n,f(n)} \right) ' =
\sum_f    c'_{1,f(1)}   \dots  c'_{n,f(n)}  =
\sum_f    c_{1,1-f(1)}   \dots  c_{n,1-f(n)}$, where $f$  
varies among those function 
$f: \{ 1, \dots , n\} \to \{ 0, 1  \} $ such that
$  f(1) + \dots + f(n)$  is even.
If $n$ is odd, then $  f(1) + \dots + f(n)$  is even
if and only if $  (1 - f(1)) + \dots + (1-f(n))$  is odd,
hence we get the alternative expression for $ \bar{a}$. 
The other cases are treated in a similar way.

Having completed the proof of the claim,
we now notice that $ \bar{a}$ and $c_{1,0}$
are incomparable, since   
$c_{1,1}+c_{2,0}+c_{3,0} + \dots+ c_{n,0}$
is larger than $ \bar{a}$ but not larger than 
$c_{1,0}$; moreover,
$c_{1,1}c_{2,1}c_{3,1}  \dots c_{n,1}$
is smaller than $ \bar{a}$ but not smaller than 
$c_{1,0}$. Similarly, since $n \geq 2$, both $ \bar{a}$
and $ \bar{b}$ are incomparable with
each one of the elements
 $ c_{1,0}, c_{2,0},  
\dots, \allowbreak  c_{n,0}, c_{1,1}, c_{2,1}, \dots, 
 c_{n,1}$.   

Let $\mathbf S$ be the subsemilattice of the 
(join-semilattice reduct) of $\mathbf  C$ generated by
the elements 
$0 $ and $  c_{1,0}, 
\dots, \allowbreak  c_{n,0}, c_{1,1}, \dots, 
 c_{n,1}, \bar b, \bar a$. In particular,
every nonzero element of $\mathbf S$  is 
$\geq$ than at least one of the elements in the above
list. 
Since we have showed that all 
the elements in the list are pairwise incomparable,
they are distinct atoms of $\mathbf S$. 
By construction, the join operation on $\mathbf S$ 
is the restriction of the join operation on $\mathbf  C$;
in particular, the order relation on $\mathbf S$ is
the restriction of the order relation on $\mathbf  C$,
hence there is no notational issue.
In what follows we will frequently use  meet and complementation
 in $\mathbf  C$,
however, these are used only in order to get conclusions speaking just of
$+$ and $\leq$, hence such conclusions hold in $\mathbf S$, as well. 

In $\mathbf S$ set $ c_{1,0} \nd  c_{1,1} $, $ c_{1,1} \nd  c_{1,0} $,
$ c_{2,0} \nd  c_{2,1} $, \dots, \allowbreak
$ c_{n,0} \nd  c_{n,1} $,  \allowbreak
$ c_{n,1} \nd  c_{n,0} $  and let all the other
pairs of $\mathbf S \setminus \{  0\} $ be $\delta$-related;
in particular,   $ \bar b \ddd \bar a$,
since $ \bar b $ and $ \bar a$ are both distinct from
$ c_{1,0},  
\dots, \allowbreak   
 c_{n,1}$.
Since $ c_{1,0},  c_{1,1},   
\dots, \allowbreak   
 c_{n,1}$
are distinct atoms 
of  $\mathbf S$, 
 $\mathbf S$ is
 a contact join-semilattice.  
Then the property \eqref{d2n} fails in $\mathbf S$, 
by taking $a=\bar a$ and  $b=\bar b$.

Since the inclusion is a join-semilattice embedding
from $\mathbf S$ to $\mathbf  C$, we can apply
 Lemma \ref{lemcombis} in order to endow
$\mathbf  C$ with a weak contact relation.
Note that the assumptions from Lemma \ref{lemcombis}(ii)
are satisfied, since the pairs which are not $ \delta $-related
in $\mathbf S$ 
 are  mutual
complements in $\mathbf  C$.  
Hence the inclusion is also a contact embedding.
Thus clause (1) from Theorem \ref{thmb} is satisfied, hence,
by the equivalence of (1) and (3) in Theorem \ref{thmb},
$\mathbf S$ satisfies (D1), and also \eqref{d1+} by \cite[Lemma 2.3]{cs}. 
  
It remains to check that 
(D2$_{m}$) holds in $\mathbf S$,
for every $m<n$.
Clearly, in \eqref{d2n} one may assume that
each pair  $(c_{i,0},  c_{i,1})$ 
never repeats, 
since from repeating pairs we only get repeated 
(or additional) summands
in the relevant sums. Moreover, 
without loss of generality, 
we can assume that 
no $c_{i,j}$ is equal to $0$.
Indeed, assume that, say, $c_{n,0}=0$.
Then the assumptions of \eqref{d2n} are satisfied
if and only if the assumptions of   
  (D2$_{n-1}$) are satisfied, by discarding the pair
$c_{n,0}$, $c_{n,1}$.  

Since in $\mathbf S \setminus \{ 0 \} $ the only 
 $\delta$-unrelated pairs are 
$(c_{1,0},c_{1,1} )$, $(  c_{2,0},c_{2,1}) $, \dots, 
$  ( c_{n,0},c_{n,1})$, 
 by symmetry, without loss of generality 
we can  assume
that the elements in the premise 
of (D2$_{m}$)  are actually the elements
$(c_{1,0},c_{1,1} )$, \dots,
$  ( c_{m,0},c_{m,1})$
introduced in the definition of $\mathbf S$.
In other words, we may assume that the notation
does not clash, by taking here, of course,
$m$ in place of $n$ in \eqref{d2n}.  
So let us assume that, in the above situation,  $a$ and  $b$
are elements of $\mathbf S$ satisfying the
premises of (D2$_{m}$). 

Using the assumptions in (D2$_{m}$),
we get $ab \leq \prod \{ \,    c_{1,f(1)}  + \dots + c_{m,f(m)}  \mid
f: \{ 1, \dots , m\} \to \{ 0, 1  \}\,\}$, hence, 
arguing as in the proof of $ \bar{a} \bar{b}=0 $, we get   
$ab=0$ in $\mathbf  C$, 
hence $b \leq a' $. 
If  $c_{1,0} \leq a$, then 
$b \leq a' \leq c_{1,0}' =c_{1,1}$. 
Moreover, if 
$c_{1,0} < a$, then
$a' < c_{1,0}'$, thus
$b < c_{1,1}$,
but this is impossible if $b \in S$,
unless $b=0$.
In conclusion, if $a,b \in S$ and 
$c_{1,0} \leq a$, then
either $b=0$, or both $a=c_{1,0}$ and $b=c_{1,1}$.
Then the conclusion of
  (D2$_{m}$), that is, $b \nd  a$,  holds in both cases.
The same argument applies if 
$a$ is $\geq$ than one among the elements $ c_{1,1}, c_{2,0},  c_{2,1}  
\dots, \allowbreak   
 c_{n,1}$.

Since 
every nonzero element of $\mathbf S$  is 
$\geq$ than at least one of the elements $ c_{1,0},
\allowbreak  \dots, \allowbreak 
 c_{n,1}, \bar b, \bar a$,
it remains to treat the cases
$a \geq \bar a$ and $a \geq \bar b$.
We first claim that   $c_{1,f(1)}  + \dots + c_{m,f(m)} \geq \bar{a}$,
for no $f: \{ 1, \dots , m\} \to \{ 0, 1  \}$.
Indeed, for each fixed  $f: \{ 1, \dots , m\} \to \{ 0, 1  \}$,
let $g: \{ 1, \dots , n\} \to \{ 0, 1  \}$ be any function
such that $g(i) \neq f(i)$, for $i=1, \dots, m$.
Then   the meet of 
$c_{1,f(1)}  + \dots + c_{m,f(m)}$ and
 $c_{1,g(1)}c_{2,g(2)}  \dots  c_{n,g(n)}$
is $0$. On the other hand, by the Claim above,
and choosing $g(n)$ appropriately
(this can be done, since $n > m$),
we have $c_{1,g(1)}c_{2,g(2)}  \dots  c_{n,g(n)} \leq  \bar a $.
Since we are working in the
Boolean algebra 
freely generated by 
$ c_{1,0},  
\dots, \allowbreak  c_{n,0}$ and 
the $c_{i,1}$'s are their complements, then 
$c_{1,g(1)}c_{2,g(2)}  \dots \allowbreak  c_{n,g(n)} >0$.
This shows that     
$c_{1,f(1)}  + \dots + c_{m,f(m)} $ is not $ \geq \bar{a}$.

In the assumptions of 
(D2$_{m}$), in the nontrivial cases,
we have that $a \leq c_{1,f(1)}  + \dots + c_{m,f(m)}$,
for at least one function  $f: \{ 1, \dots , m\} \to \{ 0, 1  \}$.
Since we have showed that 
$\bar a \nleq c_{1,f(1)}  + \dots + c_{m,f(m)}$,
it is not the case that $ \bar a \leq a$.
Similarly,  $ \bar b \nleq a$.
Hence the only remaining case to be treated
 is the trivial situation when
$b \leq c_{1,f(1)}  + \dots + c_{m,f(m)}$,
for every function  $f: \{ 1, \dots , m\} \to \{ 0, 1  \}$.
 In this case $b=0$, hence the conclusion of 
(D2$_{m}$) follows.
\end{proof}  

The argument in the proof of 
Theorem \ref{dn} 
is very similar in spirit to
\cite[Example 5.2(d)]{cs}.
The main simplification here is obtained by constructing $\mathbf S$
directly as a join-subsemilattice of some Boolean algebra,   
thus Theorem \ref{thmb} can be invoked in order
to get (D1), with no need of complicated computations.
Notice that, while $\mathbf S$ in the proof is comparatively
well-behaved, the weak contact on $\mathbf  C$ is not even additive.
Indeed, $ \bar{a}$ and $ \bar{b}$ are in contact, but they 
are sums of distinct atoms of $\mathbf  C$, by the Claim,
and such atoms are not in contact pairwise.
Of course, this argument can be carried out in $\mathbf  C$
but not in $\mathbf S$.

Having proved Theorem \ref{dn}, the
non-existence of finite axiomatizability
of the class of Ivanova contact join semilattices
 is now an immediate consequence
of the compactness theorem

\begin{corollary} \labbel{nonfin}
The class of  Ivanova contact join-semilattices, namely, the class of
 contact join-semilattices satisfying \textup{(D1)} and \textup{(D2)},
is not first-order finitely axiomatizable.
\end{corollary}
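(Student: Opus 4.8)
The plan is to derive Corollary \ref{nonfin} from Theorem \ref{dn} by a standard compactness argument. The key observation is that Theorem \ref{ivan} characterizes Ivanova Contact join-semilattices as exactly those weak contact semilattices satisfying (D1) together with \eqref{d2n} for \emph{every} positive integer $n$. Now (D1) is a single first-order sentence, and each individual condition \eqref{d2n} is also expressible as a first-order sentence (for fixed $n$ it is a universally quantified implication over finitely many variables $a,b,c_{1,0},\dots,c_{n,1}$). Thus the class of Ivanova Contact join-semilattices is axiomatized by the infinite theory $T = \Sigma \cup \{(\mathrm{D1})\} \cup \{\text{(D2}_n) \mid n \geq 1\}$, where $\Sigma$ collects the semilattice axioms together with the weak contact axioms (symmetry, reflexivity, and \eqref{ext}).

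Suppose, toward a contradiction, that this class were finitely axiomatizable, say by a single first-order sentence $\varphi$ (a finite set of axioms may be replaced by their conjunction). Then $\varphi$ and $T$ would have the same models, so in particular $T \models \varphi$. By the compactness theorem there is a finite subtheory $T_0 \subseteq T$ with $T_0 \models \varphi$. Since $T_0$ is finite, it mentions only finitely many of the conditions \eqref{d2n}; hence there exists an integer $N$ such that $T_0$ is contained in $\Sigma \cup \{(\mathrm{D1})\} \cup \{\text{(D2}_m) \mid m < N\}$. Consequently every model of $\Sigma \cup \{(\mathrm{D1})\} \cup \{\text{(D2}_m) \mid m < N\}$ satisfies $T_0$, and therefore satisfies $\varphi$.

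To reach the contradiction I would invoke Theorem \ref{dn} with $n = N$ (taking $N \geq 2$, which is harmless since enlarging $N$ only strengthens the finite subtheory). Theorem \ref{dn} produces a contact semilattice $\mathbf S_N$ that satisfies \eqref{d1+} — and in particular its case $n=1$, namely (D1) — and satisfies (D2$_m$) for every $m < N$, but fails \eqref{d2n} with $n=N$. Such an $\mathbf S_N$ is a weak contact semilattice (being a contact semilattice it satisfies $\Sigma$), so it is a model of $\Sigma \cup \{(\mathrm{D1})\} \cup \{\text{(D2}_m) \mid m < N\}$, whence $\mathbf S_N \models \varphi$. But $\varphi$ axiomatizes the class of Ivanova Contact join-semilattices, so $\mathbf S_N$ would have to satisfy \eqref{d2n} for every $n$, in particular for $n = N$, contradicting the defining property of $\mathbf S_N$.

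I expect no serious obstacle here: the entire content of the non-finite-axiomatizability result is carried by the family of witnesses constructed in Theorem \ref{dn}, and the corollary is the routine compactness packaging of that family. The only points demanding a word of care are the first-order expressibility of each \eqref{d2n} (immediate, as each is a sentence over finitely many variables) and the reduction of a finite set of axioms to a single sentence $\varphi$ via conjunction. Both are standard, so the proof is genuinely short once Theorem \ref{dn} is in hand.
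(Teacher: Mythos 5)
Your proposal is correct and follows essentially the same route as the paper: both express (D1) and each (D2$_n$) as first-order sentences, suppose a single axiomatizing sentence exists, apply compactness to reduce to finitely many (D2$_m$), and then contradict this using the witnesses from Theorem \ref{dn}. The only cosmetic difference is that you use semantic compactness directly where the paper routes through the completeness theorem first; the content is identical.
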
 

\begin{proof} 
Conditions (D1) and \eqref{d2n} can be expressed
as first-order sentences, say, $\varphi$  and $\varphi_n$,
thus the theory $T=\{ \varphi \} \cup \{ \, \varphi_n \mid n \in \mathbb N^+ \,\}$
axiomatizes the class of Ivanova contact join-semilattices.
Assume towards a contradiction that
the class is finitely axiomatizable.  So,  there is a 
 sentence $ \psi$ axiomatizing it.
By the completeness theorem,
  $ \psi$ is a consequence of $T$; then, by the compactness
theorem, $\psi$ is also a consequence of some finite subset of $T$,
say,  $\psi$ is  a consequence of
 $\{ \varphi \} \cup \{ \, \varphi_n \mid n \leq \bar n\,\}$,
for some $ \bar{n}$. But all the sentences of  $T$ are  consequences of 
  $\psi$, since $\psi$ axiomatizes the same class, hence 
all the sentences of  $T$ are  consequences of
 $\{ \varphi \} \cup \{ \, \varphi_n \mid n \leq \bar n\,\}$.
This contradicts Theorem \ref{dn}.
\end{proof}  

An anonymous referee suggested that
Corollary \ref{nonfin}  can be strengthened to non-axiomatizability
with finitely many variables. This is indeed the case,
and automatically follows from the fact that semilattices,
hence also weak contact join-semilattices, 
are locally finite. Just observe that, modulo the axioms of
a locally finite theory in a finite language, for every $n$,
up to logical equivalence,
 there are only a finite number of sentences
containing at most $n$ variables.  
Thus, for a locally finite theory in a finite language, 
finite axiomatizability is the same as being axiomatized using only
finitely many variables.

\section{Further remarks} \labbel{fur}

\begin{proposition} \labbel{propba}
If $\mathbf S$ is a join-semilattice with overlap weak contact
and $\mathbf S$ satisfies (D1), then $\mathbf S$ satisfies (D2),
in particular, $\mathbf S$ is additive, by \cite[Remark 2.2]{cs}.
 \end{proposition}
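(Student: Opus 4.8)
The plan is to verify \eqref{d2n} directly for each $n$, using the fact that for the overlap relation both the non-contact hypotheses and the target conclusion translate into statements about common lower bounds. Fix $n$ and elements $a,b,c_{1,0},c_{1,1},\dots,c_{n,0},c_{n,1}$ satisfying the premises of \eqref{d2n}; thus $c_{i,0}\nd c_{i,1}$ for each $i$, and for every $f\colon\{1,\dots,n\}\to\{0,1\}$ we have $b\leq c_{1,f(1)}+\dots+c_{n,f(n)}$ or $a\leq c_{1,f(1)}+\dots+c_{n,f(n)}$. I would argue by contradiction: suppose $b\ddd a$. Since $\delta$ is the overlap relation, this yields some $p\in S$ with $p>0$, $p\leq a$ and $p\leq b$.

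The key step is to observe that this single $p$ sits below \emph{every} sum occurring in the premise. Indeed, fix $f$; the premise gives $b\leq c_{1,f(1)}+\dots+c_{n,f(n)}$ or $a\leq c_{1,f(1)}+\dots+c_{n,f(n)}$, and in either case combining with $p\leq b$ (respectively $p\leq a$) and transitivity of $\leq$ gives $p\leq c_{1,f(1)}+\dots+c_{n,f(n)}$. Hence $p\leq c_{1,f(1)}+\dots+c_{n,f(n)}$ for all $f$. Now I would invoke \eqref{d1+}, which holds in $\mathbf S$ because (D1) does and (D1) implies \eqref{d1+} by \cite[Lemma 2.3]{cs}. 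Applying \eqref{d1+} with $0$ in the role of $a$ and $p$ in the role of $b$ --- legitimate since $0\in S$ --- the hypotheses $c_{i,0}\nd c_{i,1}$ and $p\leq 0+c_{1,f(1)}+\dots+c_{n,f(n)}$ for all $f$ are exactly what we have, so we conclude $p\leq 0$, i.e.\ $p=0$, contradicting $p>0$. Therefore $b\nd a$, which is the conclusion of \eqref{d2n}; as $n$ was arbitrary, $\mathbf S$ satisfies (D2). The final assertion that $\mathbf S$ is additive is then immediate from \cite[Remark 2.2]{cs}.

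I do not anticipate a genuine obstacle here: overlap contact is precisely the feature that lets one replace the disjunctive hypothesis of \eqref{d2n} by a single witness $p$ lying below all the relevant sums, after which \eqref{d1+} collapses everything to $p\leq 0$. The only points needing a little care are recording that \eqref{d1+} is available from (D1) via \cite[Lemma 2.3]{cs}, and that the degenerate instance in which $a$ is taken to be $0$ is permitted because $S$ contains its minimum element.
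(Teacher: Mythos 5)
Your proof is correct and follows essentially the same route as the paper: both reduce the disjunctive premise of \eqref{d2n} to the statement that any common lower bound of $a$ and $b$ lies below every sum $c_{1,f(1)}+\dots+c_{n,f(n)}$, then apply \eqref{d1+} with $a=0$ (available from (D1) via \cite[Lemma 2.3]{cs}) to force that lower bound to be $0$, and finally invoke the overlap definition of the contact. The only cosmetic difference is that the paper argues directly that the meet of $a$ and $b$ exists and equals $0$, whereas you phrase it as a contradiction starting from an overlap witness $p$.
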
 

\begin{proof} 
Let $n$  be an arbitrary positive integer
and let $a^*$  and $b^*$  be two elements of $S$ 
 satisfying the 
antecedent of \eqref{d2n}, namely, for any
 $f: \{ 1, \dots , n\} \to \{ 0, 1  \}$,
 either the former or the latter is $\leq   c_{1,f(1)}  + \dots + c_{n,f(n)}$.
 Thus, if some $d \in S$  is below both $a^*$  and $b^*$,
then, for every $f: \{ 1, \dots , n\} \to \{ 0, 1  \}$,
$d \leq   c_{1,f(1)}  + \dots + c_{n,f(n)}$.
Since by \cite[Lemma 2.3]{cs} $\mathbf S$ 
satisfies \eqref{d1+}, we see that $d \leq 0$  by putting $0$  for $a$ 
and $d$  for $b$   in \eqref{d1+}.
 Thus, by the  arbitrariness of $d$, 
the element $0$ is the only lower bound
of $a^*$  and $b^*$, hence it is  the meet of $a^*$  and $b^*$.
 Since the contact is  overlap by
assumption, we get $a^* \nd b^*$.
\end{proof}    

\begin{corollary} \labbel{ivov}
The class of those Ivanova contact join-semilattices
which have overlap contact is finitely axiomatizable.
 \end{corollary}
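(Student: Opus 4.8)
The plan is to show that, once we restrict attention to overlap contact, the infinitely many axioms (D2$_n$) collapse to a single first-order condition, after which the result follows at once. By Theorem \ref{ivan}, the Ivanova contact semilattices are precisely the weak contact semilattices satisfying (D1) together with \eqref{d2n} for every $n$; so a priori one needs infinitely many sentences, and the task is to see that the overlap hypothesis renders all but finitely many of them redundant.

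First I would record that the property of \emph{having} overlap contact is itself first-order. Every weak contact relation already contains the overlap relation: if $p>0$, $p\leq a$ and $p\leq b$, then $p \ddd p$ by reflexivity (as $p\neq 0$), whence $a \ddd b$ by \eqref{ext}. Therefore, to say that $\delta$ coincides with overlap it suffices to impose the reverse inclusion, namely the single sentence
\begin{equation*}
\forall a\,\forall b\,\bigl(\, a \ddd b \ \rightarrow\ \exists p\,(\,0<p \ \&\ p\leq a \ \&\ p\leq b\,)\,\bigr).
\end{equation*}
Thus, relative to the weak contact semilattice axioms, ``overlap contact'' is captured by one first-order sentence.

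Next I would invoke Proposition \ref{propba}, which states that for a semilattice with overlap weak contact the single condition (D1) already implies (D2), i.e.\ all of the sentences \eqref{d2n}. Consequently, within the class of weak contact semilattices having overlap contact, the requirements (D1) and (D2) together are equivalent to (D1) alone, and combining this with Theorem \ref{ivan} we see that the Ivanova contact semilattices with overlap contact are exactly the weak contact semilattices that have overlap contact and satisfy (D1).

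It then remains only to collect the axioms: the finitely many semilattice and weak contact axioms (symmetry, reflexivity on nonzero elements, and \eqref{ext}), the single sentence (D1) (the case $n=1$ of \eqref{d1+}, already noted to be first-order in Corollary \ref{fa}), and the overlap sentence displayed above. All are first-order and finite in number, so the class is finitely axiomatizable. The only mildly delicate point is the very first step --- observing that overlap needs only its nontrivial inclusion to be asserted, because the easy inclusion is forced on every weak contact relation; once that is in place, Proposition \ref{propba} does the substantive work of discarding the problematic family \eqref{d2n}.
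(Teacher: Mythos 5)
Your proposal is correct and follows essentially the same route as the paper: combine Theorem \ref{ivan} with Proposition \ref{propba} to reduce the axioms to (D1) plus overlap contact, then observe both are first-order expressible. Your extra observation that only the nontrivial inclusion of overlap need be asserted (since every weak contact relation contains overlap by reflexivity and \eqref{ext}) is a fine elaboration of the paper's brief remark that overlap contact is first-order, but it is not a different argument.
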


 \begin{proof}
By  Theorem \ref{ivan} and Proposition \ref{propba},
 a weak contact join-semilattice $\mathbf S$
with overlap contact 
is Ivanova if and only if $\mathbf S$ 
satisfies (D1).
Then notice that  both (D1) and ``having overlap contact'' 
are properties expressible by a first-order sentence. 
 \end{proof}

\begin{remark} \labbel{final}
We present a final remark
about the classes of contact join-semilattices discussed in this
note.
We have showed that the class of
Ivanova contact Join-semilattices is not finitely
axiomatizable, while the larger class of
 weak contact join-semilattices
satisfying the conditions in Theorem \ref{thmb} is
indeed  finitely axiomatizable. 
 This result might suggest the idea that
the latter class is more natural, and this is surely 
a reasonable point of view.

However, the situation may be seen from a different
perspective. As we hinted in the introduction,
$n$-ary ``hypercontact''   relations are much more general than
binary relations \cite{V}. In \cite{hypercs} we study
classes of join-semilattices with a hypercontact relation,  
parallel to the classes considered here. The main difference is that
in the hypercontact case
finite axiomatizability never occurs. Again, this might be an argument
in favor of considering binary relations only.
On the other hand, one might draw the conclusion 
that considering just binary relations  is an oversimplification.
Three regions might be pairwise in contact without being
in contact.

We are not taking position in favor of one alternative
or the other; rather, we believe that both kinds of structures are interesting
for their own sake and each one has its own specific  uses and applications.
On the other hand, it is surely a significant fact that, in the parallel evolution
of the notion of \emph{event structure} in computer science, a decided shift
occurred from binary relations \cite[Section 8]{WN} to 
 $n$-ary relations \cite[Subsec.\ 2.1.2]{W11}.  
In an even more general applied setting, the advantage of considering
$n$-ary interactions in place of just binary interactions
(e.~g., hypergraphs instead of graphs) is analyzed in \cite{BCI}.
See the introduction of \cite{hypercs}
for further comments and examples. 
 \end{remark}

\begin{acknowledgement}
The author thanks anonymous referees for useful comments.

Work performed under the auspices of G.N.S.A.G.A. 

The author acknowledges the MIUR Department Project awarded to the
Department of Mathematics, University of Rome Tor Vergata, CUP
E83C18000100006.
 \end{acknowledgement}

\end{document}